\theoremstyle{plain}
 \newtheorem{thm}{\textbf{Theorem}}[section]
\theoremstyle{definition}
\theoremstyle{remark}
 \numberwithin{equation}{section}
\renewcommand{\leq}{\leqslant}
\renewcommand{\geq}{\geqslant}
\title{A Characterization of $\mathcal{M}_{A_1}(\mathbb{R}^n)$}
\subjclass[2010]{Primary 30}
\author[Chu]{\bfseries Cheng Chu}
\address{
Department of Mathematics \\ 
Washington University in Saint Louis  \\ 
Saint Louis, Missouri \\
USA}
\email{chengchu@math.wustl.edu}
\begin{document}

\vspace{18mm}
\setcounter{page}{1}
\thispagestyle{empty}

\begin{abstract}
We characterize the set of all measurable functions on $\RR^n$ possessing an $A_1$ majorant, denoted as $\cM_{A_1}(\RR^n)$, by certain Banach function spaces.
We prove that a function has an $A_1$ majorant if and only if it belongs to some Banach function space for which the Hardy-Littlewood maximal operator is bounded. This answers the question posted by G. Knese, J. M$^{c}$Carthy, and K. Moen.
\end{abstract}

\maketitle

\section{Introduction}  

An $A_1$ weight $w$ is a positive locally integrable function that satisfies $$Mw\leq Cw,\,\m{a.e.}$$
Here $M$ denotes the Hardy-Littlewood maximal operator: $$Mf(x)=\sup_{x\in Q}{1\over {Q}}\int_Q |f| dx.$$

We also need the following equivalent definition of $A_1$ weight(see \cite{2}):
\beq\label{d1}
w\in A_1(\RR^n)\Llra{1\over |Q|}\int_Q wdx\leq C\inf_{Q} w_1\q\forall Q\subset\RR^n
\eeq
where the infimum is the essential infimum of $w$ over the cube $Q$.

Let $\cM_{A_1}$ be the set of measurable functions possessing an $A_1$ majorant, i.e. there exist an $A_1$ weight $w$, such that $|f|\leq w$.

To characterize $\cM_{A_1}$ on $\RR^n$, we need to use the Banach function spaces. Let us recall the notion of Banach function space. A mapping $\rho$, defined on the set of non-negative $\RR^n$-measureable functions and taking values in $[0,\infty]$, is said to be a Banach function norm if it satisfies the following conditions:

\begin{eqnarray}
\label{c1}&\q&\rho(f)=0\Llra f=0\; \m{a.e.},\;\rho(af)=a\rho(f)\;\m{for}\;a>0,\;\rho(f+g)\leq\rho(f)+\rho(g);\\
\label{c2}&\q&\m{if}\;0\leq f\leq g\;\m{a.e.},\;\m{then}\;\rho(f)\leq\rho(g);\\
\label{c3}&\q&\m{if}\;f_n\uparrow f\;\m{a.e.},\;\m{then}\;\rho(f_n)\uparrow \rho(f);\\
\label{c4}&\q&\m{if}\;B\subset\RR^n\;\m{is bounded then}\;\rho(\chi_B)<\infty;\\
\label{c5}&\q&\m{if}\;B\subset\RR^n\;\m{is bounded then}\;
\end{eqnarray}
$$\int_{B}f dx\leq C_B \rho(f)$$
$\qq\q$for some constant $C_B \in(0,\infty)$.\\\\
Given a Banach function norm $\rho$, $\cX=\cX(\RR^n,\rho)$, is the collection of all measurable functions such that $\rho(|f|)<\infty$. It is a Banach function space with norm
$$||f||_\cX =\rho(|f|).$$

In \cite{1}, G. Knese, J. M$^{c}$Carthy, and K. Moen proved by using the Rubio de Francia algorithm that
$$\bigcup \{\mathcal{X}:M\in B(\mathcal{X})\}\subset\mathcal{M}_{A_1}(\mathbb{R}^n),$$where the union is over all Banach function spaces, and conjectured these two sets are actually equal. The purpose of this note is to provide a positive answer.

\section{Main Theorem}
We are now ready to prove the theorem.

\begin{thm} \label{some label} 
$\mathcal{M}_{A_1}(\mathbb{R}^n)=\bigcup \{\mathcal{X}:M\in B(\mathcal{X})\}$.
\end{thm}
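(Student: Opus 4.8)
The plan is to prove only the inclusion $\mathcal{M}_{A_1}(\mathbb{R}^n) \subset \bigcup\{\mathcal{X} : M \in B(\mathcal{X})\}$, since the reverse inclusion is precisely the theorem of Knese--McCarthy--Moen recalled above. Fix $f \in \mathcal{M}_{A_1}(\mathbb{R}^n)$ and an $A_1$ weight $w$ with $|f| \le w$ a.e.; I would produce a single Banach function space, built only from $w$, that contains $f$ and on which $M$ is bounded, namely the weighted $L^\infty$ space $\mathcal{X}_w := \mathcal{X}(\mathbb{R}^n,\rho_w)$ attached to the norm
$$\rho_w(g) := \|g/w\|_{L^\infty(\mathbb{R}^n)} = \operatorname*{ess\,sup}_{x\in\mathbb{R}^n}\frac{g(x)}{w(x)}, \qquad g \ge 0 \text{ measurable}.$$
Because $w > 0$ a.e., $\rho_w(|f|) = \|f/w\|_{\infty} \le 1 < \infty$, so $f \in \mathcal{X}_w$ as soon as $\mathcal{X}_w$ is known to be a Banach function space.

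The second step is to verify that $\rho_w$ is a Banach function norm, i.e.\ that it satisfies \eqref{c1}--\eqref{c5}. Conditions \eqref{c1} and \eqref{c2} are immediate from $w > 0$ a.e., and \eqref{c3} follows from the Fatou property of $\|\cdot\|_{L^\infty}$ applied to $g_n/w \uparrow g/w$. For \eqref{c5}, if $B$ is bounded then $\int_B g\,dx \le \|g/w\|_\infty \int_B w\,dx = C_B\,\rho_w(g)$ with $C_B := \int_B w\,dx < \infty$ by local integrability of $w$. For \eqref{c4} one needs that $\operatorname*{ess\,inf}_Q w > 0$ on every cube $Q$; this is where $w\in A_1$ (rather than merely $w$ being a weight) enters, via \eqref{d1}: $\operatorname*{ess\,inf}_Q w \ge (C|Q|)^{-1}\int_Q w\,dx$, and $\int_Q w\,dx > 0$ since $w > 0$ a.e. Covering a bounded $B$ by a cube then gives $\rho_w(\chi_B) = 1/\operatorname*{ess\,inf}_B w < \infty$.

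The last step --- the one where the $A_1$ hypothesis actually does its job --- is the boundedness of $M$ on $\mathcal{X}_w$, and it is a single line: if $\rho_w(g) = A$ then $|g| \le Aw$ a.e., so by monotonicity of $M$ together with $Mw \le Cw$ we get $Mg \le A\,Mw \le AC\,w$ a.e., hence $\rho_w(Mg) \le C\,\rho_w(g)$. Thus $M \in B(\mathcal{X}_w)$ (with operator norm at most the $A_1$ constant of $w$), and since $f \in \mathcal{X}_w$ this yields $f \in \bigcup\{\mathcal{X} : M \in B(\mathcal{X})\}$, completing the proof.

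The only point requiring genuine care is the one flagged above: a general weight may have $\operatorname*{ess\,inf}_Q w = 0$ on some cube, which would break \eqref{c4} and make $\rho_w$ fail to be a Banach function norm, so the verification of \eqref{c4} must be routed through the $A_1$ inequality \eqref{d1}. Everything else is either bookkeeping or the observation --- really the crux of the note --- that the correct test space is the weighted $L^\infty$ space, on which the very definition $Mw \le Cw$ says verbatim that $M$ is bounded.
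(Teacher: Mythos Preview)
Your proposal is correct and follows essentially the same approach as the paper: both build the weighted $L^\infty$ space $\{g : \|g/w\|_{\infty}<\infty\}$ from an $A_1$ majorant $w$ of $f$ and then read off $M\in B(\mathcal{X})$ directly from $Mw\le Cw$. The only cosmetic difference is that the paper first replaces $w$ by $w+1$ (still in $A_1$), which makes \eqref{c4} trivial ($\rho(\chi_B)\le 1$) at the cost of routing \eqref{c5} through the $A_1$ inequality \eqref{d1}; you instead keep $w$ and use \eqref{d1} to verify \eqref{c4}, with \eqref{c5} following immediately from local integrability.
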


\begin{proof}
One direction is showed in \cite{1}, we prove the other direction: $$\mathcal{M}_{A_1}(\mathbb{R}^n)\subset\bigcup \{\mathcal{X}:M\in B(\mathcal{X})\}.$$
Fix $f_0 \in \mathcal{M}_{A_1}$, there exist $w_0\in A_1$ with $|f_0|\leq w_0$. Let $w_1=w_0+1$, then\,$w_1\in A_1(\RR ^n)$. Define $$\rho(f)=\inf\{C: |f|\leq Cw_1\},$$
then $\rho(f_0)<\infty$.

First, we show that $\rho(f)$ is a Banach function norm, so that $\mathcal{X}_0=\{ f |\rho(f)<\infty \}$ is a Banach function space. We verify the conditons \eqref{c1} through \eqref{c5}.

It is obvious that $\rho$ satisfies \eqref{c1} and \eqref{c2}.
Also, by the monotone convergence theorem, $\rho$ satisfies \eqref{c3}. And $|\chi_B|=1\leq w_1$
implies $\rho(\chi_B)\leq 1$, which gives the condition \eqref{c4}.
To see it satisfies \eqref{c5} let $B\in\RR^n$ be a bounded set. Suppose
\beq \label{t1}
f\leq ({1\over C_B} \int_B fdx) w_1
\eeq for some non-negative measurable function $f$, we may assume $\int_B fdx\neq 0$.
Then$$\int_B fdx\leq({1\over C_B}\int_B fdx)\int_B w_1dx$$
Thus$$\int_B w_1dx\geq C_B$$
Choose a cube $Q=Q(B)$ containing $B$, by \eqref{d1}:
$${1\over |Q|}\int_Q w_1dx\leq C\inf_{Q} w_1$$ for some constant $C$, so
\beq\label{t2}C_B\leq \int_B w_1dx\leq \int_Q w_1dx\leq |Q|C\inf_Q w_1<\infty\eeq
We may choose $C_B$ sufficiently large so that \eqref{t2}, thus \eqref{t1}, does not hold. By the definition of $\rho(f)$, we have
$$\rho(f)\geq {1\over C_B}\int_B fdx$$ which is \eqref{c5}.

Remains to show $M\in B(\mathcal{X}_0)$.
Let $f\in\cX_0$. For any $\Ge>0$, $|f|\leq (\rho(f)+\Ge)w_1$. So
$$Mf\leq (\rho(f)+\Ge)Mw_1\leq (\rho(f)+\Ge)Cw_1$$ for some constant $C$. By the definition of $\rho$,
$$\rho(Mf)\leq C(\rho(f)+\Ge)$$ Thus $M$ is bounded by $C$ on $\cX_0$.

\end{proof}

\bibliographystyle{amsplain}

\end{document}